\let\expandafter\oldproof\csname\string\proof\endcsname
\let\oldendproof\endproof
\renewenvironment{proof}[1][\proofname]{%
	\oldproof[\bf #1]%
}{\oldendproof}
\theoremstyle{plain}
\newtheorem{theorem}{Theorem}
\newtheorem{lemma}{Lemma}[section]
\newcommand{\poly}{\text{poly}}
\definecolor{RED}{rgb}{1,0,0}\definecolor{BLUE}{rgb}{0,0,1} 
\title{On R\"odl's Theorem for Cographs}
\author{Lior Gishboliner \thanks{Department of Mathematics, ETH, Z\"urich, Switzerland. Email: lior.gishboliner$@$math.ethz.ch. Research supported by SNSF grant 200021\_196965.} \and Asaf Shapira \thanks{School of Mathematics, Tel Aviv University, Tel Aviv 69978, Israel. Email: asafico$@$tau.ac.il. Supported in part by ERC Consolidator Grant 863438 and NSF-BSF Grant 20196.}}
\begin{document}
\date{}
\maketitle

\begin{abstract}
A theorem of R\"odl states that for every fixed $F$ and $\varepsilon>0$ there
is $\delta=\delta_F(\varepsilon)$ so that every induced $F$-free graph contains a vertex set of size $\delta n$ whose edge density is either
at most $\varepsilon$ or at least $1-\varepsilon$. R\"odl's proof relied on the regularity lemma, hence it supplied only a tower-type bound for $\delta$. Fox and Sudakov conjectured that $\delta$ can be made polynomial in $\varepsilon$, and a recent result of
Fox, Nguyen, Scott and Seymour shows that this conjecture holds when $F=P_4$. In fact, they show that the same conclusion holds even
if $G$ contains few copies of $P_4$. In this note we give a short proof of a more general statement.
\end{abstract}

\section{Introduction}\label{sec:intro}

Our investigation here is related to two of the most well studied problems in extremal graph theory.
A graph-family $\mathcal{F}$ has the {\em Erd\H{o}s-Hajnal property} if there is $c=c(\mathcal{F})>0$ such that every $n$-vertex induced $\mathcal{F}$-free graph has a clique of independent set of size at least $cn^{c}$.
The famous Erd\H{o}s-Hajnal conjecture \cite{EH} states that every non-empty family of graphs $\mathcal{F}$ has the Erd\H{o}s-Hajnal property.
A variant of the Erd\H{o}s-Hajnal conjecture was obtained by R\"odl \cite{Rodl}, who proved that if $G$ is induced $F$-free then for every $\varepsilon > 0$, $G$ contains a set of vertices of size $\delta_F(\varepsilon)\cdot n$ whose edge density is either
at most $\varepsilon$ or at least $1-\varepsilon$ (we will henceforth call such sets $\varepsilon$-homogenous).
R\"odl's proof relied on Szemer\'edi's regularity lemma, and thus supplied very weak tower-type bounds for $\delta_F(\varepsilon)$.
Fox and Sudakov \cite{FS} obtained
a quantitative improvement over R\"odl's proof by showing that one can take $\delta_F(\varepsilon)=\varepsilon^{O_F(\log 1/\varepsilon)}$.
They further conjectured that the statement holds already when $\delta=\varepsilon^{O_F(1)}$, noting that a solution of their conjecture would also resolve the Erd\H{o}s-Hajnal conjecture.

Motivated by Nikiforov's \cite{Nikiforov} strengthening of R\"odl's theorem, Fox, Nguyen, Scott and Seymour \cite{FNSS} introduced the following variant of the conjecture raised in \cite{FS}.
Let us say that a graph $F$ on $f$ vertices is {\em viral} if for every $\varepsilon>0$, there is $\delta=\varepsilon^{O_F(1)}$
so that every graph $G$ that contains at most $\delta n^f$ induced
copies of $F$ must contain an $\varepsilon$-homogenous set of size $\delta n$. The main result of \cite{FNSS} was that $P_4$, the path on $4$ vertices, is viral. Our aim in this paper is to give a very short proof of this result. In fact, we prove the following much more general statement.

\begin{theorem}\label{thm:main}
Let $\mathcal{F}$ be a finite graph family which contains a bipartite, a co-bipartite and a split graph. Suppose that $\mathcal{F}$ satisfies the Erd\H{o}s-Hajnal property. Then for every $\varepsilon > 0$ there is $\delta = \poly(\varepsilon) > 0$, where the polynomial depends on $\mathcal{F}$, such that the following holds. For every $n$-vertex graph $G$, $n \geq 1/\delta$, if $G$ contains less than $\delta n^{|V(F)|}$ induced copies of $F$ for every $F \in \mathcal{F}$, then $G$ contains an $\varepsilon$-homogenous set $X$ of size $|X| \geq \delta n$.
\end{theorem}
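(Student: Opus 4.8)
\medskip
\noindent\textit{Overview.} The plan is to reduce the theorem to a ``pure pair'' dichotomy and then bootstrap it recursively. For a graph $F$, call a family $(W_v)_{v\in V(F)}$ of pairwise disjoint subsets of $V(G)$ an \emph{$F$-blow-up} if $W_u$ is complete to $W_v$ whenever $uv\in E(F)$ and anticomplete to $W_v$ whenever $uv\notin E(F)$; picking one vertex from each class yields an induced copy of $F$, so an $F$-blow-up all of whose classes have size $\ge s$ forces at least $s^{|V(F)|}$ induced copies of $F$. Hence the hypothesis forbids, for every $F\in\mathcal{F}$, an $F$-blow-up all of whose $|V(F)|$ classes have size $\ge\delta^{1/|V(F)|}n$ --- in particular for the bipartite, the co-bipartite and the split member of $\mathcal{F}$.

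\medskip
\noindent\textit{The pure-pair dichotomy.} The crux is the following statement, which I would prove using the remark above together with the Erd\H{o}s--Hajnal property: there exist $\beta=\beta(\mathcal{F})>0$ and $\delta_0=\delta_0(\mathcal{F})>0$ such that every graph $H$ with fewer than $\delta_0|V(H)|^{|V(F)|}$ induced copies of each $F\in\mathcal{F}$ either has an $\varepsilon$-homogeneous set of size $\varepsilon^{O_{\mathcal{F}}(1)}|V(H)|$, or has disjoint $A,B\subseteq V(H)$ with $|A|=|B|\ge\beta|V(H)|$ and $A$ complete or anticomplete to $B$. The intuition is that a large bipartite blow-up obstructs a large ``sparse but complicated'' part, a co-bipartite blow-up a ``dense but complicated'' one, and a split blow-up a genuinely ``mixed'' one; ruling out all three confines $H$, on a linear-sized piece, to something close to a blow-up of a bounded graph $P$, and since $H$ has few copies of \emph{every} member of $\mathcal{F}$ while $P$ is bounded with linear-sized classes, the one-vertex-per-class observation forces $P$ to be induced $\mathcal{F}$-free; an Erd\H{o}s--Hajnal clique or independent set among the classes of $P$ then furnishes a pure pair (two classes suffice), or a homogeneous set outright if $P$ has a single class. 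Carrying this out with $P$ of size only polynomial in $1/\varepsilon$ --- i.e.\ avoiding Szemer\'edi-type regularity --- is the main technical obstacle, and is precisely where the special combinatorics of bipartite, co-bipartite and split graphs (as opposed to an arbitrary forbidden pattern) is needed.

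\medskip
\noindent\textit{Bootstrapping the dichotomy.} Fix $\varepsilon$ and set $\delta=\varepsilon^{C}$ for a large $C=C(\mathcal{F})$. Apply the dichotomy to $G$; if the first alternative ever occurs we are done, since at every node encountered (of size at least $\beta^{\Theta(\log(1/\varepsilon))}n$) an $\varepsilon$-homogeneous set of size $\varepsilon^{O(1)}\cdot(\text{node size})$ is already of size $\ge\delta n$. Otherwise we get a pure pair $(A,B)$, recurse in $G[A]$ and $G[B]$, and iterate. Since passing to an induced subgraph never increases the number of copies of any $F$, and each side of a pure pair is a $\ge\beta$-fraction of its parent, the copy-density bound grows by at most the constant factor $\beta^{-\max_F|V(F)|}$ per level; after $\ell=\Theta(\log(1/\varepsilon))$ levels it is still at most $\varepsilon^{-O(1)}\delta\le\delta_0$ provided $C$ is large. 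Stopping there, we obtain $N=2^{\ell}=\varepsilon^{-\Theta(1)}$ pieces $U_1,\dots,U_N$, each of size $\beta^{\ell}n=\varepsilon^{\Theta(1)}n$, and a graph $Q$ on $[N]$ recording whether each split performed was complete or anticomplete. By construction $Q$ is a cograph (built from singletons by disjoint unions and joins), and $U_i$ is complete to $U_j$ in $G$ when $ij\in E(Q)$ and anticomplete when $ij\notin E(Q)$. Since $\alpha(Q)\,\omega(Q)\ge N$ for a cograph $Q$ (a one-line induction on the cotree), there is $I\subseteq[N]$ with $Q[I]$ complete or empty and $|I|\ge\sqrt N=\varepsilon^{-\Theta(1)}$. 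Taking $X=\bigcup_{i\in I}U_i$, all cross-pairs inside $X$ are either present (clique case) or absent (independent case), and the within-$U_i$ pairs are outnumbered by the cross-pairs by a factor $\ge|I|$; hence the density of $G[X]$ is within $O(1/|I|)=O(\varepsilon)$ of $0$ or $1$, while $|X|=|I|\cdot\beta^{\ell}n\ge(\sqrt2\,\beta)^{\ell}n=\varepsilon^{\Theta(1)}n\ge\delta n$ (using $\beta\le1/2$). Rescaling $\varepsilon$ by a constant, $X$ is an $\varepsilon$-homogeneous set of size $\poly(\varepsilon)n$, as required.
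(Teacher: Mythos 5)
There is a genuine gap, and you essentially name it yourself: the ``pure-pair dichotomy'' is the crux of your argument, yet no proof of it is given. The sketch --- that ruling out large bipartite, co-bipartite and split blow-ups ``confines $H$, on a linear-sized piece, to something close to a blow-up of a bounded graph $P$'' --- is precisely the content of a regularity lemma with polynomial bounds, and you explicitly flag ``carrying this out with $P$ of size only polynomial in $1/\varepsilon$'' as the main technical obstacle without resolving it. In the paper this step is exactly what Lemma~\ref{lem:AFN} supplies: the hypothesis that $\mathcal{F}$ contains a bipartite, a co-bipartite and a split graph forces the class of induced $\mathcal{F}$-free graphs to have \emph{bounded VC-dimension}, and the Alon--Fischer--Newman conditional regularity lemma then gives a $\gamma$-homogeneous equipartition into only $\poly(1/\gamma)$ parts. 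Your sketch never invokes VC-dimension or any concrete tool playing this role, so the dichotomy is, as written, assumed rather than proved.

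A secondary point: even granting the dichotomy, the pure-pair bootstrapping (recursing to build a cograph on $\varepsilon^{-\Theta(1)}$ leaves and then using $\alpha(Q)\omega(Q)\ge|V(Q)|$) is an unnecessary detour. Once one has a polynomial regularity partition, the paper forms the reduced graph $R$ on the $\gamma$-homogeneous pairs, observes that $R$ must be induced $\mathcal{F}$-free (else a blow-up forces many copies of some $F\in\mathcal{F}$, contradicting the hypothesis), and applies the Erd\H{o}s--Hajnal property to $R$ \emph{directly} to obtain a clique or independent set of parts whose union is the desired $\varepsilon$-homogeneous set. So the dichotomy you want would, with essentially the same proof, already give the homogeneous set outright, and the recursion adds nothing. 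The bootstrapping calculation itself (depth $\Theta(\log(1/\varepsilon))$, copy-density inflation by $\beta^{-\max_F|V(F)|}$ per level, cograph Ramsey giving $|I|\ge\sqrt N\ge 1/\varepsilon$) is internally consistent and is in the spirit of the Fox--Nguyen--Scott--Seymour argument this paper is simplifying, but it does not repair the missing lemma.
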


\noindent

Since $P_4$ is bipartite, co-bipartite and split, and since it is well known that the Erd\H{o}s-Hajnal conjecture holds for $P_4$,
the fact that $P_4$ is viral follows immediately from Theorem \ref{thm:main}.
As another example, it is well known that $G$ is a line graphs if and only if it is induced ${\cal F}$-free, where ${\cal F}$
is a family of $9$ graphs. Since ${\cal F}$ satisfies the assertion of Theorem \ref{thm:main}, we deduce that if $G$ contains
only $\poly(\varepsilon)n^{|V(F)|}$ induced copies of each of the $9$ graphs $F \in {\cal F}$, then $G$ contains an $\varepsilon$-homogenous set of size $\poly(\varepsilon) \cdot n$.
We finally mention that we can prove a version of Theorem \ref{thm:main} which applies to some infinite families ${\cal F}$ is analogy with
Theorem 5 in \cite{GS}; the details are omitted.

One of the tools used in \cite{FNSS} is the polynomial removal lemma for $P_4$ of \cite{AF}.
While our proof of Theorem \ref{thm:main} is inspired by an alternative proof of this result
in \cite{GS}, our proof here is much simpler. It is in fact very similar to the Regularity+Tur\'an+Ramsey
proof technique that was first introduced in \cite{Rodl} and later used in numerous works applying the regularity method.
The key differences which give the improved polynomial bound are that we replace the application
of Szemer\'edi's regularity lemma, with an application of the Alon-Fischer-Newman regularity lemma \cite{AFN}, and that we replace
the application of Ramsey's theorem, with an application of the (assumed) Erd\H{o}s-Hajnal property.

\section{Proof of Theorem \ref{thm:main}}\label{sec:proof}

We will need the following lemma.

\begin{lemma}\label{lem:AFN}
Let $\mathcal{F}$ be a family of graphs which contains a bipartite, a co-bipartite and a split graph. Then for every $\gamma > 0$ there is $\beta = \poly(\gamma) > 0$ (where the polynomial depends on $\mathcal{F}$) such that the following holds. For every $k_0 \geq 1$ and every $n$-vertex graph $G$, $n \geq k_0/\beta$, if $G$ contains less than $\beta n^{|V(F)|}$ induced copies of $F$ for every $F \in \mathcal{F}$, then $G$ has an equipartition into $k$ parts $V_1,\dots,V_k$, where $k_0 \leq k \leq k_0/\beta$, such that for all but $\gamma \binom{k}{2}$ of the pairs $1 \leq i < j \leq k$ it holds that $d(V_i,V_j) \geq 1-\gamma$ or $d(V_i,V_j) \leq \gamma$.
\end{lemma}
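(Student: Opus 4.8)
The plan is to derive Lemma~\ref{lem:AFN} by combining the Alon--Fischer--Newman regularity lemma with an induced embedding (counting) argument, where the three prescribed graph types in $\mathcal{F}$ play two distinct roles. The first step is to turn the hypothesis ``$G$ has few induced copies of a bipartite graph $B$, a co-bipartite graph $B'$ and a split graph $S$ from $\mathcal{F}$'' into the statement that $G$ has bounded VC dimension (after deleting a $o(1)$-fraction of its edges, if one wants VC dimension strictly below some $m=m(B,B',S)$). The key combinatorial observation is that an $m$-element shattered set $A$, together with the $2^{|A|}$ vertices witnessing the shattering, contains an induced copy of \emph{one} of the three types on any prescribed number of vertices: applying Ramsey's theorem inside the (exponentially many) witnesses and inside $A$ produces homogeneous pieces whose union realizes a bipartite, co-bipartite or split pattern according to the two clique/independent-set types obtained, and for $m$ large enough this piece contains the relevant member of $\mathcal{F}$ as an induced subgraph. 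Counting then shows that many shattered $m$-sets would force $\Omega(n^{|V(F)|})$ induced copies of one of $B,B',S$, contradicting the hypothesis once $\beta$ is small; hence essentially all of $G$ has VC dimension below $m$.

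The second step is to invoke the Alon--Fischer--Newman regularity lemma \cite{AFN}, which for graphs of bounded VC dimension supplies, for every $\gamma'>0$, a $\gamma'$-regular equipartition into $k$ parts with $k=\poly(1/\gamma')$ (the exponent depending on the VC dimension, hence on $\mathcal{F}$). Starting from the prescribed $k_0$, and equitably refining the output partition if necessary (which preserves regularity at a mild cost), we obtain $k_0\le k\le k_0/\beta$ with $\beta=\poly(\gamma')$. Here I would choose $\gamma'$ to be a suitable polynomial in $\gamma$, with exponent depending on $B$, small enough for the induced counting lemma used next.

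The third step handles the density condition and uses only the bipartite graph $B\in\mathcal{F}$. Discard the at most $\gamma'\binom{k}{2}$ irregular pairs. If a $\gamma'$-regular pair $(V_i,V_j)$ has density in $(\gamma,1-\gamma)$, then embedding the two sides of a bipartition of $B$ into $V_i$ and $V_j$ and applying the induced embedding lemma (valid since the density is bounded away from both $0$ and $1$ and $\gamma'$ is small in terms of $B$ and $\gamma$) yields at least $c(B,\gamma)(n/k)^{|V(B)|}$ induced copies of $B$ inside $V_i\cup V_j$. Summing over all such ``intermediate-density'' pairs and comparing with the bound $\beta n^{|V(B)|}$ on the number of induced copies of $B$ in $G$, the number of such pairs is at most $\beta k^{|V(B)|}/c(B,\gamma)$, which is below $\tfrac{\gamma}{2}\binom{k}{2}$ once $\beta$ is a sufficiently small polynomial in $\gamma$ (using $k=\poly(1/\gamma)$). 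Adding back the irregular pairs shows that all but $\gamma\binom{k}{2}$ pairs have density $\ge 1-\gamma$ or $\le\gamma$, as required.

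I expect the main obstacle to be the first step: making the passage ``few induced copies of the three types $\Rightarrow$ bounded VC dimension'' precise, and in particular coping with the fact that we are given few, not zero, induced copies. This needs either a clean-up argument removing the sparse family of shattering witnesses, or a direct defect version of the Sauer--Shelah and K\H{o}v\'ari--S\'os--Tur\'an type counting. The remainder is the familiar regularity-plus-counting template, the point being that both the regularity lemma and this clean-up are now polynomial, which is what yields $\beta=\poly(\gamma)$.
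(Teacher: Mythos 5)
Your high-level route matches the paper's: the statement is obtained from the fact that forbidding a bipartite, a co-bipartite and a split graph forces bounded VC-dimension of the neighbourhood set system (the paper simply cites Lemma~2.2 of \cite{GS} for exactly this Ramsey-inside-a-shattered-set argument you sketch), combined with the Alon--Fischer--Newman regularity lemma for bounded VC-dimension. The paper's proof is literally a two-line citation; the point you correctly identify as the ``main obstacle'' --- that the hypothesis is about \emph{few}, not \emph{zero}, induced copies --- is precisely what the word ``conditional'' in ``conditional regularity lemma of Alon, Fischer and Newman'' absorbs: that lemma is stated so that failure of the partition produces polynomially many witnessed shatterings, which the Ramsey argument then converts to polynomially many induced copies of some $F\in\mathcal F$. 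So steps 1 and 2 of your proposal, suitably packaged, are the intended proof.

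Your step 3, however, is both unnecessary and, as written, incorrect. Unnecessary because the AFN/Lov\'asz--Szegedy regularity for bounded VC-dimension already delivers an equipartition in which all but a $\gamma$-fraction of pairs have density $\le\gamma$ or $\ge 1-\gamma$ --- not merely a $\gamma'$-regular partition; there is no residual ``intermediate-density'' case to clean up. Incorrect because the counting argument you propose does not produce induced copies of the bipartite graph $B$: embedding the two sides of $B$ into $V_i$ and $V_j$ controls the cross-edges between $V_i$ and $V_j$ via regularity, but an induced copy of $B$ also requires that the images of each side be independent sets, i.e.\ it imposes conditions on the edges \emph{inside} $V_i$ and \emph{inside} $V_j$, which the pair $(V_i,V_j)$ being regular with intermediate density says nothing about (if $V_i$ is a clique, no such copy exists). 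A correct version of this step would have to pass through a refinement into many sub-parts and an embedding across $|V(B)|$ distinct parts, or, more simply, should be replaced by the observation that a regular pair with intermediate density realises all bipartite patterns between its two sides and hence certifies large VC-dimension --- which folds it back into step~1 and into what the cited AFN lemma already proves.
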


Lemma \ref{lem:AFN} follows immediately from the conditional regularity lemma of Alon, Fischer and Newman \cite{AFN} for graphs with bounded VC-dimension (see also \cite{LS}), and the fact that if $\mathcal{F}$ contains a bipartite graph, a co-bipartite graph and a split graph, then the family of induced $\mathcal{F}$-free graphs has bounded VC-dimension (see Lemma 2.2 in \cite{GS}).

\begin{proof}[Proof of Theorem \ref{thm:main}]
		Let $f := \max_{F \in \mathcal{F}}|V(F)|$ and $c := c(\mathcal{F})$.
		Let $\varepsilon > 0$. The required $\delta(\varepsilon) = \poly(\varepsilon)$ will be given implicitly by the proof.
		Set
		$$
		\gamma := \min\left\{\frac{1}{f^2}, \; \frac{\varepsilon}{4}, \; \frac{1}{2}\left(\frac{c\varepsilon}{4}\right)^{1/c}\right\}$$
		and $k_0 := \lceil 1/\gamma \rceil$. Let $\beta = \beta(\gamma)$ be given by Lemma \ref{lem:AFN}. Note that $\beta = \poly(\varepsilon)$ because $\gamma = \poly(\varepsilon)$.
		Let $G$ be an $n$-vertex graph. We may assume that $G$ contains less than $\beta n^{|V(F)|}$ induced copies of $F$ for every $F \in \mathcal{F}$. Let then $V_1,\dots,V_k$ be a partition as guaranteed by Lemma \ref{lem:AFN}. Note that $k = \poly(1/\varepsilon)$ because $\beta = \poly(\varepsilon)$ and $k_0 = \poly(1/\varepsilon)$.
		Define an auxiliary graph $R'$ on $[k]$ where $ij \in E(R')$ if $d(V_i,V_j) \geq 1-\gamma$ or $d(V_i,V_j) \leq \gamma$. Lemma \ref{lem:AFN} guarantees that $e(R') \geq (1-\gamma)\binom{k}{2} \geq (1 - 2\gamma)\frac{k^2}{2}$, using that $k \geq k_0 \geq 1/\gamma$. By Tur\'an's theorem, $R'$ contains a clique $A$ of size $|A| = \lceil \frac{1}{2\gamma} \rceil$. Now define an auxiliary graph $R$ on $A$ where $ij$ is an edge if $d(V_i,V_j) \geq 1-\gamma$ and $ij$ is a non-edge if $d(V_i,V_j) \leq \gamma$.
		
\paragraph{Case 1: $R$ contains an induced copy of some $F \in \mathcal{F}$.}
Let $i_1,\dots,i_m \in A$ be the vertices of such a copy; so $m = |V(F)| \leq f$. Sample $v_{i_j} \in V_{i_j}$ uniformly at random and independently, $j = 1,\dots,m$. By the definition of $R$, for each $i_ji_{\ell} \in E(F)$ we have $d(V_{i_j},V_{i_{\ell}}) \geq 1-\gamma$, and for each $i_ji_{\ell} \notin E(F)$ we have $d(V_{i_j},V_{i_{\ell}}) \leq \gamma$. By the union bound, the probability that $v_{i_1},\dots,v_{i_m}$ {\bf do not} span an induced copy of $F$ is at most $\binom{m}{2}\varepsilon \leq \binom{f}{2}\gamma < \frac{1}{2}$, using that $\gamma < 1/f^2$. It follows that $G$ has at least $\frac{1}{2}|V_{i_1}| \cdot \dots \cdot |V_{i_m}| = \frac{1}{2}(n/k)^m = \poly(\varepsilon) \cdot n^{|V(F)|}$ induced copies of $F$, completing the proof in this case.
		
\paragraph{Case 2: $R$ is induced $\mathcal{F}$-free.}
By the choice of $c = c(\mathcal{F})$, the graph $R$ contains a clique or independent set $B \subseteq A$ of size $|B| \geq c|A|^{c} \geq c(\frac{1}{2\gamma})^c \geq 4/\varepsilon$, using our choice of $\gamma$. Suppose without loss of generality that $B$ is an independent set, and write $B = \{i_1,\dots,i_t\}$. For every $1 \leq j < \ell \leq t$, we have $d(V_{i_j},V_{i_{\ell}}) \leq \gamma \leq \frac{\varepsilon}{4}$. Also, the number of edges which are contained in one of the sets $V_{i_1},\dots,V_{i_t}$ is at most $t \cdot \binom{n/k}{2} \leq \frac{t n^2}{2k^2} \leq \frac{\varepsilon t^2n^2}{8k^2}$, using that $t \geq 4/\varepsilon$. Hence, setting $X = V_{i_1} \cup \dots \cup V_{i_t}$, we have $|X| = t\frac{n}{k} = \poly(\varepsilon)n$ and
		$$
		e(X) \leq \frac{\varepsilon t^2 n^2}{8k^2} + \binom{t}{2} \frac{\varepsilon}{4} \cdot \frac{n^2}{k^2} \leq \frac{\varepsilon t^2 n^2}{4k^2}.
		$$
		As $\binom{|X|}{2} = \binom{tn/k}{2} \geq \frac{t^2n^2}{4k^2}$, we have that
		$$
		d(X) = \frac{e(X)}{\binom{X}{2}} \leq \frac{\varepsilon t^2 n^2/(4k^2)}{t^2n^2/(4k^2)} \leq \varepsilon,
		$$
		as required.
\end{proof}


\begin{thebibliography}{99}

\bibitem{AF}
N. Alon and J. Fox, Easily testable graph properties, Combin. Probab. Comput. 24 (2015), 646--657.

	\bibitem{AFN}
	N. Alon, E. Fischer, and I. Newman, Efficient testing of bipartite graphs for forbidden induced
	subgraphs, SIAM J. Comput. 37 (2007), 959–976.
	
	
	\bibitem{EH}
	P. Erd\H{o}s and A. Hajnal, Ramsey-type theorems, Discrete Applied Mathematics 25 (1989), 37–52.
	
	
	\bibitem{FNSS}
	J. Fox, T. Nguyen, A. Scott and P. Seymour, Induced subgraph density. II. Sparse and dense sets in cographs, arXiv preprint arXiv:2307.00801, 2023.
	
	\bibitem{FS}
	J. Fox and B. Sudakov, Induced Ramsey-type theorems, Advances in Mathematics 219 (2008),
	1771–1800.
	
	\bibitem{GS}
	 L. Gishboliner and A. Shapira, Removal lemmas with polynomial bounds. International Math Research Notices (IMRN) (2021), 14409–14444.
	
	 \bibitem{LS}
	  L. Lov\'asz and B. Szegedy, Regularity partitions and the topology of graphons, An Irregular
	 Mind, Imre B\'ar\'any, Jozsef Solymosi, and G\'abor S\'agi editors, Bolyai Society Mathematical
	 Studies 21 (2010), 415–446.
	
	 \bibitem{Nikiforov}
	 V. Nikiforov, Edge distribution of graphs with few copies of a given graph, Combin. Probab. Comput. 15 (2006), 895–902.
	
	 \bibitem{Rodl}
	 V. R\"odl, On universality of graphs with uniformly distributed edges, Discrete Mathematics
	 59 (1986), 125–134.
\end{thebibliography}
\end{document}